\newtheorem{thm}{Theorem}[section]
\newtheorem{lem}[thm]{Lemma}
\newtheorem{cor}[thm]{Corollary}
\theoremstyle{definition}
\theoremstyle{remark}
\newtheorem{rmk}[thm]{Remark}
\newenvironment{question}{\par\vspace{5pt}\noindent\textbf{Question.}\itshape}{ \par\vspace{5pt}}
\newcommand{\CC}{\mathds{C}}
\newcommand{\FF}{\mathds{F}}
\newcommand{\PP}{\mathds{P}}
\newcommand{\A}{\mathcal{A}}
\newcommand{\B}{\mathcal{B}}
\newcommand{\M}{\mathcal{M}}
\newcommand{\N}{\mathcal{N}}
\newcommand{\R}{\mathcal{R}}
\newcommand{\mfM}{\mathfrak{M}}
\newcommand{\mfN}{\mathfrak{N}}
\renewcommand{\S}{\mathcal{S}}
\renewcommand{\L}{\mathcal{L}}
\DeclareMathOperator{\Sing}{Sing}
\DeclareMathOperator{\PGL}{PGL}
\begin{document}

%%%%%%%%%%%%%%%%%%%%%%%%%%%%%%
%                                        Title                                                  %
%%%%%%%%%%%%%%%%%%%%%%%%%%%%%%

\title[Topology and homotopy of lattice isomorphic arrangements]{Topology and homotopy of lattice isomorphic arrangements}

\author{Beno\^it Guerville-Ball\'e}
\address{
Instituto de Ci\^encias Matem\'aticas e de Computa\c{c}\~ao
Universidade de S\~ao Paulo
Avenida Trabalhador Sancarlense, 400 - Centro
S\~ao Carlos - SP, 13566-590 (Brazil)
}
\email{benoit.guerville-balle@math.cnrs.fr}
\thanks{During the current work the author has been supported by a JSPS post-doctoral grant and by the postdoctoral grant \#2017/15369-0 of the Funda\c{c}\~ao de Amparo \`a Pesquisa do Estado de S\~ao Paulo (FAPESP)}
  
  \subjclass[2010]{
52C30, % Planar arrangements of lines and pseudolines (discrete geometry)
32S22, % Relations with arrangements of hyperplanes 
32Q55, % Topological aspects of complex manifolds
54F65, % Topological characterizations of particular spaces
14E25 % Embeddings
}		% Code AMS

\begin{abstract}
  We prove the existence of lattice isomorphic line arrangements having $\pi_1$-equivalent or homotopy-equivalent complements and non homeomorphic embeddings in the complex projective plane. We also provide two explicit examples, one is formed by real-complexified arrangements while the second is not.
\end{abstract}

\maketitle

%%%%%%%%%%%%%%%%%%%%%%%%%%%%%%
%                                      Sections                                              %
%%%%%%%%%%%%%%%%%%%%%%%%%%%%%%

\section*{Introduction}
	A \emph{line arrangement} $\A$ is a finite collection $\{L_1,\dots,L_n\}$ of lines in the complex projective plane $\CC\PP^2$. Its \emph{topology} is defined as the homeomorphism type of its embedding in $\CC\PP^2$. The complement 
$M(\A)=\CC\PP^2\setminus \bigcup_{L\in\A}L$ is an important invariant of the topology. The two main results about it seem to be antithetic. The first is due to Orlik and Solomon in~\cite{OrlSol}, where they prove that the cohomology ring of the complement is determined by the \emph{intersection lattice}. The latter, the one of Rybnikov~\cite{Ryb}, asserts that the fundamental group of the complement is not determined by the intersection lattice, providing thus the first example of lattice isomorphic arrangements having different topologies (also called a \emph{Zariski pair}).

It is known that the fundamental group of $M(\A)$ is not determined by the topology. Indeed, Falk constructs in~\cite{Falk}, an explicit example of two arrangements having homotopy-equivalent complements and non-homeomorphic topologies. Nevertheless, these arrangements are not lattice isomorphic. However, the result of Rybnikov~\cite{Ryb} (see also~\cite{ACGM:funda,AGV:torsion}) implies that the intersection lattice does not determine the fundamental group of $M(\A)$. Finally, it has been proven by Jiang-Yau~\cite{JiaYau} that the intersection lattice of an arrangement is induced by its topology. 

In order to complete the understanding of these implications between intersection lattice, homotopical type and topological type, we wonder:

\begin{question}
  For a fixed intersection lattice, is the topology of an arrangement determined by the fundamental group or the homotopy-type of its complement?
\end{question}

In other words, are there any $\pi_1$-equivalent or homotopy-equivalent Zariski pairs of line arrangements? Moreover, the particular case of real-complexified arrangements has to be considered too. Indeed, it has recently been proven by Artal, Viu-Sos and the author in~\cite{AGV:torsion}, that the fundamental group of such arrangements is not determined the intersection lattice; solving then a Falk-Randell Problem~\cite{FalkRandell}, and providing an equivalent of Rybnikov's result in the real-complexified case. It is thus logical to look for a complete understanding in this particular case too.\\

In the present paper, we give a negative answer to the previous question. Indeed, in Theorem~\ref{thm:main}, we produce $\pi_1$-equivalent and homotopy-equivalent Zariski pairs from usual Zariski pairs (having some combinatorial properties). We conclude in Corollary~\ref{cor:ZP} remarking that the Zariski pairs of~\cite{ACCM:ZP,Gue:ZP,GueViu,Ryb} all verify the conditions of the previous theorem. This construction can be summarized as follows: we consider the generic union of the two arrangements of a Zariski pair; then we add two lines intersecting in one of the line of the first arrangement for the first case, and in one of the second arrangement for the latter, which are generic with all the other lines. This addition of two extra lines will force any homeomorphism to send the first arrangement on the second and then to create a Zariski pair. The $\pi_1$-equivalence and the homotopy-equivalent are obtained using Theorem~\ref{thm:equivalence} due to~\cite{OkaSak,Wil}.

The conclusions of the papers~\cite{ACCM:ZP} and~\cite{Gue:ZP} use the same argument to remove the ordered condition on their ordered Zariski pairs (ie adding a non-generic line to trivialize the combinatorics automorphism group). In the last section of this paper, we give alternative ends to these papers allowing to obtain $\pi_1$-equivalent and homotopy-equivalent Zariski pairs. The first provided pair is composed of complex line arrangements with 13 lines derived from the Zariski pair obtained by the author in~\cite{Gue:ZP} and is $\pi_1$-equivalent. The latter is formed by real line arrangements composed of 14 lines derived from the ordered Zariski pair produced in~\cite{ACCM:ZP} and is homotopy-equivalent.

\section{Existence of homotopy-equivalent Zariski pair}\label{sec:ZP}
	\subsection{Combinatorics and ordered Zariski pair}\mbox{}

	The \emph{combinatorics} of an arrangement $\A=\{L_1,\dots,L_n\}$ is encoded in the \emph{intersection lattice} (or equivalently in the underlying matroid). This lattice is given by: $\L(\A)=\{\bigcap_{L\in\B} L \neq \emptyset \mid \B\subset\A \}$, and it is ordered by the reverse inclusion. An isomorphism between the intersections lattices of $\A_1$ and $\A_2$ is a bijection between $\A_1$ and $\A_2$ which respect $\L(\A_1)$ and $\L(\A_2)$ together with the reverse inclusion. Such arrangements are called \emph{lattice isomorphic}.
	
	We can add a total order on the line of an arrangement $\A$ and then consider \emph{ordered arrangement} and the associated \emph{ordered combinatorics}. An isomorphism between the intersection lattice of two ordered arrangements is \emph{ordered} if it respect the fixed orders on the arrangements and \emph{non-ordered} otherwise. Notice that if there is an ordered isomorphism between two ordered intersection lattices, then this isomorphism is unique.
 
  \begin{rmk}
      Let $\A=\{L_1,\dots ,L_n\}$ be an ordered arrangement. If no order is precised then we consider the one given by the indices.
  \end{rmk}

\noindent We define an \emph{ordered Zariski pair} as a couple of ordered arrangements $(\A_1,\A_2)$ such that:
  \begin{itemize}
    \item $\A_1$ and $\A_2$ have isomorphic ordered intersection lattice,
    \item Any homeomorphism $\psi$ of $\CC\PP^2$ verifying $\psi(\A_1)=\A_2$ induces a non-ordered isomorphism on the combinatorics of $\A_1$ and $\A_2$. 
  \end{itemize}
	
	Let $\A_1$ and $\A_2$ be two ordered arrangements intersecting generically. We denote the ordered arrangement $\A_1\sqcup\A_2$ by $\A_{1,2}$ where the order is the one induced by those of $\A_1$ and $\A_2$ and such that:
	\begin{equation*}
		\forall L^1\in\A_1, \forall L^2\in\A_2, L^1 < L^2.
	\end{equation*}
	
	\begin{rmk}\label{rmk:same_arr}
		The arrangements $\A_{1,2}$ and $\A_{2,1}$ are not the same ordered arrangement even if they are the same arrangement.
	\end{rmk}

 \subsection{Augmented arrangement and homotopy of the complement}\mbox{}
    
      Let $\A=\{L_1,\dots,L_n\}$ be an arrangement, and let $L$ be a fixed line of $\A$. An \emph{augmented arrangement} of $\A$ along $L$ is an arrangement $\A_L^+=\{L_1,\dots,L_n,L_{n+1},L_{n+2}\}$ such:
      \begin{enumerate}
        \item The lines $L$, $L_{n+1}$ and $L_{n+2}$ are concurrent,
        \item The arrangements $\{L_{n+1},L_{n+2}\}$ and $\A\setminus L$ intersect generically.
      \end{enumerate} 
			
		This construction allows to keep a control on the homotopy of the complement of the augmented arrangement as stated in the following theorem.
		
		\begin{thm}[\cite{OkaSak,Wil}]\label{thm:equivalence}
			Let $\A$ be an ordered line arrangement and $L,L'$ be two lines of $\A$. The arrangements $\A^+_L$ and $\A^+_{L'}$ are $\pi_1$-equivalent. More precisely:
			\begin{equation*}
				\pi_1(M(\A^+_{L})) \simeq \pi_1(M(\A)) \times \FF_2 \simeq\pi_1(M(\A^+_{L'})).
			\end{equation*}
			Futhermore, if $\A$ is a complexified-real arrangement then $\A^+_L$ and $\A^+_{L'}$ are homotopy-equivalent.
		\end{thm}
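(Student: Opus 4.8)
The plan is to prove, for any augmentation $\A^+_L$ of $\A$ along $L$, that $M(\A^+_L)$ is $\pi_1$-equivalent to $M(\A)\times N$, where $N$ denotes the complement of a pencil of three concurrent lines; deleting one line of such a pencil and sending it to infinity turns the other two into parallel affine lines, so $N\cong\CC\times(\CC\setminus\{0,1\})$ is aspherical with $\pi_1(N)\cong\FF_2$ and $N\simeq S^1\vee S^1$. The same decomposition applied to any augmentation $\A^+_{L'}$ of $\A$ along $L'$ then yields the displayed chain of isomorphisms; and upgrading the decomposition to a homotopy equivalence in the complexified-real case gives $M(\A^+_L)\simeq M(\A)\times N\simeq M(\A^+_{L'})$.

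For the fundamental group, send $L$ to infinity, so that with $\CC^2=\CC\PP^2\setminus L$ one has
\begin{equation*}
  M(\A^+_L)=\CC^2\setminus(C_1\cup C_2),\qquad
  C_1:=\bigcup_{L_i\in\A\setminus L}L_i,\qquad C_2:=L_{n+1}\cup L_{n+2},
\end{equation*}
where $C_1$ is an affine curve of degree $n-1$ and $C_2$ is the union of the two parallel affine lines $L_{n+1},L_{n+2}$ (they meet only at $P\in L$). The second condition in the definition of $\A^+_L$ forces each of $L_{n+1}$ and $L_{n+2}$ to meet every line of $\A\setminus L$ transversally in a single point, these $2(n-1)$ points being pairwise distinct and lying off $L$; hence $C_1\cap C_2$ consists of exactly $\deg C_1\cdot\deg C_2=2(n-1)$ points, all transverse. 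The Oka--Sakamoto theorem~\cite{OkaSak} then applies to the pair $(C_1,C_2)$ and gives
\begin{equation*}
  \pi_1\bigl(M(\A^+_L)\bigr)\ \cong\ \pi_1(\CC^2\setminus C_1)\times\pi_1(\CC^2\setminus C_2)\ =\ \pi_1\bigl(M(\A)\bigr)\times\FF_2 ,
\end{equation*}
since $\CC^2\setminus C_1=\CC\PP^2\setminus\A=M(\A)$ and $\CC^2\setminus C_2=N$. Running the same argument with $L'$ in place of $L$ gives the chain of isomorphisms in the statement.

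For the homotopy assertion, suppose $\A$ is complexified-real and choose $P$, $L_{n+1}$, $L_{n+2}$ real, so $\A^+_L$ is again complexified-real. The point is to promote the splitting above to a genuine homotopy equivalence $M(\A^+_L)\simeq M(\A)\times N$, and this is where~\cite{Wil} enters: for a complexified-real arrangement the complement deformation retracts onto a finite CW complex modelled on the real stratification (a Salvetti-type complex), and the transversality recorded above lets one arrange this retraction to be compatible with the projection $M(\A^+_L)\subset\CC^2\to\CC$ from the point $P$ (two of whose fibers are $L_{n+1}$ and $L_{n+2}$), so that the resulting retract splits as the product of the corresponding retracts of $M(\A)=\CC^2\setminus C_1$ and of $N=\CC^2\setminus C_2$. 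Applying this to $L$ and to $L'$ then gives $M(\A^+_L)\simeq M(\A)\times N\simeq M(\A^+_{L'})$.

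The $\pi_1$ part should be routine once the geometry is in place: the only thing to verify is the combinatorics of $C_1\cap C_2$, which is precisely what the second defining condition of an augmented arrangement supplies. I expect the homotopy-theoretic upgrade to be the main obstacle. For a general \emph{complex} arrangement it is not known whether the complement of a transverse union of curves is homotopy equivalent to the product of the two complements---only the fundamental group is controlled, by Oka--Sakamoto---so one genuinely needs the real structure, and the delicate point is to build the deformation retraction of $M(\A^+_L)$ so that it respects the product over the pencil direction rather than merely inducing the product decomposition on $\pi_1$; this is the content of~\cite{Wil} and the step I would expect to demand the most work.
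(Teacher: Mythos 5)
Your treatment of the fundamental group is correct and is exactly the paper's argument: after sending $L$ to infinity, condition (2) in the definition of an augmented arrangement guarantees that $C_1$ and $C_2$ meet transversally in $\deg C_1\cdot\deg C_2=2(n-1)$ distinct affine points, so the Oka--Sakamoto theorem applies and gives $\pi_1(M(\A^+_L))\cong\pi_1(M(\A))\times\FF_2$, and likewise for $L'$.

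The homotopy part, however, rests on a claim that is false: the equivalence $M(\A^+_L)\simeq M(\A)\times N$ cannot hold unless $\A$ is a pencil. The complement $M(\A^+_L)$ is the complement of a curve in a complex surface, hence has the homotopy type of a CW complex of dimension $2$, so $H^3(M(\A^+_L))=0$; on the other hand $N\simeq S^1\vee S^1$, so by the K\"unneth formula $H^3(M(\A)\times N)\cong H^2(M(\A))\otimes H^1(N)$ has rank $2\,b_2(M(\A))$, which is already nonzero for three generic lines (where $M(\A)\cong(\CC^*)^2$). Hence no deformation retraction of $M(\A^+_L)$, Salvetti-type or otherwise, can split as a product with $M(\A)$ as a factor: the Oka--Sakamoto product decomposition lives only on $\pi_1$ and is obstructed from being promoted to the space level. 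The paper takes a different route for this part: it observes that $\A^+_L$ is a $2$-generic section of the parallel connection of $\A$ with a pencil of three lines, and invokes Williams' theorem that, for complexified-real arrangements, the homotopy type of the complement of such a section does not depend on which line of $\A$ is used for the connection. That statement compares $M(\A^+_L)$ and $M(\A^+_{L'})$ directly, without asserting any product decomposition, and so is not touched by the Betti-number obstruction above. To repair your argument you would need to replace the product splitting by this (or an equivalent) independence statement.
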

		
		The first part of the previous theorem is due to Oka and Sakamoto in~\cite{OkaSak}. Notice that this can also be obtained from~\cite{FalkPro} and~\cite{HammLe}. Then, since an augmented arrangement is a 2-generic section of the parallel connection of the given arrangement with a pencil of 3 lines (see~\cite{FalkPro,DenSuc} for more details about parallel connections), the second part of the theorem is given by Williams in~\cite{Wil}.

\subsection{Homotopy-equivalent and $\pi_1$-equivalent Zariski pairs}\label{subsec:existence}\mbox{}
  
	In order to prove our main result, let us recall the notion of \emph{connected arrangement} introduced by Fan~\cite{Fan}. Let $\Sing(\A)$ be the set of all the singular points of $\bigcup_{L\in\A}L$, and let $\S_k(\A)\subset\Sing(\A)$ be the subset of all the singular point of multiplicity $k$. An arrangement $\A$ is connected if the set $\A_{\geq 3}=\bigcup_{L\in\A} L \setminus \S_{\geq 3}(\A)$ is path-connected. Notice that this property is combinatorial.
	
	\begin{thm}\label{thm:main}
		Let $\A_1=\{L_1^1,\dots,L_n^1\}$ and $\A_2=\{L_1^2,\dots,L_n^2\}$ be a Zariski pair, $\phi$ be the order isomorphism between their combinatorics (ie $\phi(L_i^1)=L_i^2$). We fix $k\in\{1,\dots,n\}$ and denote $\ell_j=L_k^j$ (for $j=1,2$). We assume that:
		\begin{enumerate}
			\item\label{C1} The arrangements $\A_1$ and $\A_2$ are connected,
			\item\label{C2} They intersect generically,
			\item\label{C3} For $j=1$ or $2$, any line of $\A_j$ contains at least two points $\S_{\geq 3}(\A_j)$.
		\end{enumerate}
		The arrangements $(\A_{1,2})^+_{\ell_1}$ and $(\A_{2,1})^+_{\ell_2}$ verify the following properties:
		\begin{enumerate}[(I)]
			\item\label{P1} They have isomorphic intersection lattices,
			\item\label{P2} There is no homeomorphism of $\CC\PP^2$ sending $(\A_{1,2})^+_{\ell_1}$ on $(\A_{2,1})^+_{\ell_2}$, 
			\item\label{P3} Their complements are $\pi_1$-equivalent; furthermore, if $\A_1$ and $\A_2$ are real-complexified arrangements then the complements are homotopy-equivalent.
		\end{enumerate}
	\end{thm}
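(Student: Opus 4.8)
The plan is to verify the three properties in turn, with the bulk of the work concentrated on property~\eqref{P2}.

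For property~\eqref{P1}, I would argue purely combinatorially. Since $\A_1$ and $\A_2$ share an (ordered) intersection lattice via $\phi$, the generic unions $\A_{1,2}$ and $\A_{2,1}$ have isomorphic lattices as well (generic intersection adds only double points between the two halves, and $\phi$ together with the identity on the second block extends to a lattice isomorphism). Adding the augmenting pencil $\{L_{n+1},L_{n+2}\}$ along $\ell_1$ on one side and along $\ell_2=\phi(\ell_1)$ on the other is, by the very definition of an augmented arrangement, a combinatorial operation: the new lattice depends only on the isomorphism class of the old one and on the combinatorial position of the line we augment along. Since $\phi$ sends $\ell_1$ to $\ell_2$, the resulting lattices are isomorphic. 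I would write this out as a short lemma identifying the lattice of $\A_L^+$ in terms of that of $\A$.

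Property~\eqref{P2} is the heart of the matter and the step I expect to be the main obstacle. Suppose for contradiction there is a homeomorphism $\psi$ of $\CC\PP^2$ with $\psi\big((\A_{1,2})^+_{\ell_1}\big)=(\A_{2,1})^+_{\ell_2}$. By Jiang--Yau, $\psi$ induces a lattice isomorphism between the two augmented arrangements. The key point is to show that \emph{any} such lattice isomorphism must carry the sub-arrangement $\A_1$ (sitting inside $(\A_{1,2})^+_{\ell_1}$) onto $\A_2$ (sitting inside $(\A_{2,1})^+_{\ell_2}$), respecting the order isomorphism $\phi$ up to the allowed ambiguity; then restricting $\psi$ would give a homeomorphism realizing an \emph{ordered} isomorphism of the original Zariski pair, contradicting the definition of a Zariski pair. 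To pin down the sub-arrangements combinatorially I would exploit the structure: the two augmenting lines $L_{n+1}, L_{n+2}$ together with $\ell_1$ form the unique triple point through which two lines pass that are otherwise generic with everything, so any lattice isomorphism must send $\{L_{n+1},L_{n+2}\}$ to $\{L_{n+1},L_{n+2}\}$ and hence $\ell_1\mapsto\ell_2$. Condition~\eqref{C3} is what rigidifies the rest: the half of the arrangement that has the property ``every line carries at least two points of multiplicity $\ge 3$'' cannot be confused, line by line, with the other half, which after generic union and augmentation has lines (coming from the two generic-position extra lines, or from the ``poor'' side) failing this; this forces the isomorphism to preserve the partition into the two original halves. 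One then checks the induced bijection $\A_1\to\A_2$ is forced to be $\phi$ (or a combinatorial automorphism composed with it, which still contradicts being a Zariski pair, or rather: since the augmentation along $\ell_1$ versus $\ell_2$ breaks the remaining symmetry, it is exactly $\phi$). The careful bookkeeping here — ruling out all ``exotic'' lattice isomorphisms between the two augmented arrangements — is the delicate part, and connectedness (condition~\eqref{C1}, via Fan's results) is presumably invoked to control how the generic union of two connected arrangements sits combinatorially so that the two halves are recognizable.

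Property~\eqref{P3} is then immediate from Theorem~\ref{thm:equivalence}: set $\A=\A_{1,2}$, which equals $\A_{2,1}$ as an \emph{unordered} arrangement, with $L=\ell_1$ and $L'=\ell_2$ two of its lines. Theorem~\ref{thm:equivalence} gives
\begin{equation*}
  \pi_1\big(M((\A_{1,2})^+_{\ell_1})\big)\simeq \pi_1(M(\A_{1,2}))\times\FF_2 \simeq \pi_1\big(M((\A_{2,1})^+_{\ell_2})\big),
\end{equation*}
so the complements are $\pi_1$-equivalent; and when $\A_1,\A_2$ are real-complexified, so is their generic union $\A_{1,2}$ (one may choose generic real perturbations), hence the second part of Theorem~\ref{thm:equivalence} applies and the complements are homotopy-equivalent. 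I would only need to note explicitly that $\A_{1,2}$ and $\A_{2,1}$ denote the same unordered arrangement (Remark~\ref{rmk:same_arr}), so that the hypotheses of Theorem~\ref{thm:equivalence} are literally met with a single ambient arrangement $\A$ and two of its lines.
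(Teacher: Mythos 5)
Your handling of properties (\ref{P1}) and (\ref{P3}) is essentially the paper's: an explicit lattice isomorphism for (\ref{P1}), and for (\ref{P3}) the observation (Remark~\ref{rmk:same_arr}) that $\A_{1,2}$ and $\A_{2,1}$ are the same unordered arrangement, so the two augmented arrangements are augmentations of a single arrangement along two of its lines and Theorem~\ref{thm:equivalence} applies directly. The opening moves of your argument for (\ref{P2}) are also the right ones: the two augmenting lines are combinatorially recognizable (by Condition~(\ref{C3}) they are the only lines carrying a single point of $\S_{\geq 3}$), and $\ell_1$ is then recognizable as the unique line through $L_{2n+1}\cap L_{2n+2}$ carrying at least two points of $\S_{\geq 3}$, so any homeomorphism must send $\ell_1$ to $\ell_2$.

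However, the central step of (\ref{P2}) --- showing that the homeomorphism must carry $\A_1$ onto either $\A_1$ or $\A_2$ \emph{as a block}, so that $\ell_1\mapsto\ell_2$ forces $\A_1\mapsto\A_2$ --- is both misattributed and left unexecuted in your write-up. You claim Condition~(\ref{C3}) separates the two halves because one of them is ``poor''; this is wrong: (\ref{C3}) is a combinatorial condition, so (as the paper remarks) if $\A_1$ satisfies it then so does $\A_2$, and after the generic union \emph{every} line of $\A_1\cup\A_2$ still carries at least two points of $\S_{\geq 3}$. Condition~(\ref{C3}) cannot tell the halves apart. What actually recognizes the partition is the pair (\ref{C1})$+$(\ref{C2}): generic intersection gives $\S_{\geq 3}(\A_{1,2})=\S_{\geq 3}(\A_1)\sqcup\S_{\geq 3}(\A_2)$ and hence $(\A_{1,2})_{\geq 3}=(\A_1)_{\geq 3}\sqcup(\A_2)_{\geq 3}$, and Fan-connectedness of each $\A_j$ makes these two pieces exactly the path-connected components of $(\A_{1,2})_{\geq 3}$, which any homeomorphism (or lattice isomorphism) must permute. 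You flag this only as ``presumably invoked,'' i.e., you have not supplied the argument at the one point where hypotheses (\ref{C1}) and (\ref{C2}) are actually used --- this is a genuine gap, not bookkeeping. Finally, your last step is heavier than needed: once $\psi(\A_1)=\A_2$ you already have a homeomorphism of $\CC\PP^2$ sending $\A_1$ to $\A_2$, which contradicts the (unordered) Zariski-pair hypothesis outright; there is no need to identify the induced lattice isomorphism with $\phi$ or to discuss orderedness.
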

	
	\begin{rmk}
		The conditions~(\ref{C1}) and~(\ref{C3}) are combinatorial, thus if they are verified by $\A_1$ then they are also verified by $\A_2$; and, up to the action of $\PGL_3(\CC)$, condition~(\ref{C2}) is always true.
	\end{rmk}
	
	\begin{proof}\mbox{}
		
		\noindent $\bullet$ (\ref{P1}): The application $\phi^+:(\A_{1,2})^+_{\ell_1}\rightarrow(\A_{2,1})^+_{\ell_2}$ defined below is an (ordered) isomorphism between the intersection lattices.
		\begin{equation*}
			\phi^+ : \left\{ 
			\begin{array}{lll}
				L_i^1 & \longmapsto & L_i^2 \\
				L_i^2 & \longmapsto & L_i^1 \\
				L_{2n+1} & \longmapsto & L_{2n+1} \\
				L_{2n+2} & \longmapsto & L_{2n+2} \\
			\end{array}
			\right.
		\end{equation*}\\
		
		\noindent $\bullet$ (\ref{P2}): We assume that there exists a homeomorphism $\psi^+$ of $\CC\PP^2$ sending $(\A_{1,2})^+_{\ell_1}$ on $(\A_{2,1})^+_{\ell_2}$. By Condition~(\ref{C3}), $L_{2n+1}$ and $L_{2n+2}$ are the only lines of $(\A_{1,2})^+_{\ell_1}$ and $(\A_{2,1})^+_{\ell_2}$ containing a single point of $\S_{\geq 3}$, then $\psi^+(\{L_{2n+1},L_{2n+2}\})=\{L_{2n+1},L_{2n+2}\}$. Thus $\psi^+$ is also a homeomorphism between $\A_{1,2}$ and $\A_{2,1}$. 
		
		By Condition~(\ref{C2}), $(\A_{1,2})_{\geq 3}=(\A_1)_{\geq 3}\sqcup (\A_2)_{\geq 3}$. Furthermore, Condition~(\ref{C1}) induces that the previous decomposition is a decomposition in path-connected components. In particular, this implies that $\psi^+$ fixes or exchanges $\A_1$ and $\A_2$.
		
		Condition~(\ref{C3}) implies that in $\A_{1,2}$ (resp. $\A_{2,1}$) the line $\ell_1$ (resp. $\ell_2$) is the only line containing at least two points of $\S_{\geq 3}(\A_{1,2})$ (resp. $\S_{\geq 3}(\A_{2,1})$) together with the intersection point of the two lines containing a single point of $\S_{\geq 3}$ (by the definition of augmented arrangements). Since $\psi^+$ respects the combinatorics of $(\A_{1,2})^+_{\ell_1}$ and $(\A_{2,1})^+_{\ell_2}$, then $\psi^+(\ell_1)=\ell_2$. This implies, in particular, that $\psi^+$ sends $\A_1$ and $\A_2$, which is impossible since $\A_1$ and $\A_2$ form a Zariski pair.\\
		
		\noindent $\bullet$ (\ref{P3}): By Remark~\ref{rmk:same_arr}, the arrangements $\A_{1,2}$ and $\A_{2,1}$ are the same arrangement. Thus $(\A_{1,2})^+_{\ell_1}$ and $(\A_{2,1})^+_{\ell_2}$ are two augmentation of the same arrangement along different lines. We conclude using Theorem~\ref{thm:equivalence}.		
	\end{proof}
	
  \begin{cor}\label{cor:ZP}
		For a fixed intersection lattice, the topology of an arrangement is not determined by the fundamental group or the homotpy-type of its complement. 
  \end{cor}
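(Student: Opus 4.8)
The plan is to deduce Corollary~\ref{cor:ZP} directly from Theorem~\ref{thm:main} by feeding it a concrete Zariski pair that satisfies the three hypotheses. First I would fix a Zariski pair $(\A_1,\A_2)$ of line arrangements in $\CC\PP^2$ for which: (i) both arrangements are connected in the sense of Fan, (ii) they can be taken to intersect generically, and (iii) for at least one index $j$ every line of $\A_j$ contains at least two points of $\S_{\geq 3}(\A_j)$. The Zariski pairs of~\cite{ACCM:ZP,Gue:ZP,GueViu,Ryb} are all built from configurations that are rich in triple (and higher) points, and a direct inspection of their incidence data shows that each of their lines meets at least two points of $\S_{\geq 3}$, so condition~(\ref{C3}) holds; condition~(\ref{C1}) is a routine combinatorial check on the same incidence data (the set $\A_{\geq 3}$ being path-connected), and condition~(\ref{C2}) costs nothing since it holds after applying a suitable element of $\PGL_3(\CC)$, as observed in the remark following Theorem~\ref{thm:main}.

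Next, with such a pair in hand, I would pick any line $\ell_1 = L_k^1$ together with its image $\ell_2 = \phi(\ell_1) = L_k^2$, and form the two augmented arrangements $(\A_{1,2})^+_{\ell_1}$ and $(\A_{2,1})^+_{\ell_2}$. Theorem~\ref{thm:main} then yields at once that they have isomorphic intersection lattices (property~(\ref{P1})), that there is no homeomorphism of $\CC\PP^2$ carrying one onto the other (property~(\ref{P2})), and that their complements are $\pi_1$-equivalent (property~(\ref{P3})). Consequently these two arrangements realize one and the same intersection lattice $\L$, they have distinct topology by~(\ref{P2}), and yet $\pi_1(M((\A_{1,2})^+_{\ell_1})) \simeq \pi_1(M((\A_{2,1})^+_{\ell_2}))$: this is precisely a $\pi_1$-equivalent Zariski pair, showing that for the fixed lattice $\L$ the topology is not determined by the fundamental group of the complement.

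For the homotopy-type statement I would additionally require the starting Zariski pair $(\A_1,\A_2)$ to consist of real-complexified arrangements, which the pair of~\cite{ACCM:ZP} satisfies. Then the last clause of Theorem~\ref{thm:main}(\ref{P3}) upgrades the $\pi_1$-equivalence to a homotopy equivalence of the complements, while~(\ref{P1}) and~(\ref{P2}) are untouched; this produces a homotopy-equivalent Zariski pair realizing a single fixed lattice, and hence the topology is not determined by the homotopy type of the complement either.

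The only genuine work in this argument is the verification of hypotheses~(\ref{C1}) and~(\ref{C3}) for the chosen example: both are purely combinatorial conditions on the intersection lattice, so the task reduces to reading off the point–line incidences of one of the cited arrangements and checking that $\A_{\geq 3}$ is path-connected and that no line is incident to fewer than two points of $\S_{\geq 3}$. I expect this bookkeeping to be the main — indeed the only — obstacle; everything else is a formal application of Theorem~\ref{thm:main}.
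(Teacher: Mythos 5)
Your proposal is correct and follows essentially the same route as the paper: both deduce the corollary by applying Theorem~\ref{thm:main} to the known Zariski pairs of~\cite{Ryb,Gue:ZP} (for the $\pi_1$-equivalent case) and to the real-complexified pairs of~\cite{ACCM:ZP,GueViu} (for the homotopy-equivalent case), after checking the combinatorial hypotheses~(\ref{C1})--(\ref{C3}). The paper's proof is just as terse about the verification of those hypotheses as yours, so there is nothing to add.
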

	
	\begin{proof}
		The Zariski pairs given in~\cite{Ryb,Gue:ZP} verify Conditions~(\ref{C1})--(\ref{C3}) of Theorem~\ref{thm:main}, then they provide (through the construction of the theorem) $\pi_1$-equivalent Zariski pairs. The ones given in~\cite{ACCM:ZP,GueViu} are real-complexified Zariski pairs. Once again they verify the conditions of Theorem~\ref{thm:main}, thus homotopy-equivalent Zariski pairs exist.
	\end{proof}

\section{Preserving the homotopy-equivalence in two known examples}\label{sec:example}
	  
	In the construction previously given, we obtain $\pi_1$-equivalent and homotopy-equivalent Zariski pairs, but the number of lines needed increase fastly. Indeed, the smallest example provided contains 24 lines. We can produce smaller examples using the ordered Zariski pairs given in~\cite{ACCM:ZP} and~\cite{Gue:ZP} (which are not Zariski pairs). In both papers, the ordered condition is deleted by the addition of a specific line trivializing the automorphism group of the combinatorics. Unfortunately, this operation can also delete the $\pi_1$-equivalence as it has been proven in~\cite{ACGM:funda}. In this section, we give an alternative end at these papers using the notion of augmented arrangement in order to trivialize the automorphism group, and maintain the $\pi_1$-equivalence or the homotopy-equivalence of the arrangements.
  
  These ordered Zariski pairs allow to produce two explicit examples. The pair of~\cite{Gue:ZP} gives rise to an example of $\pi_1$-equivalent Zariski pair with 13 complex lines; while the one of~\cite{ACCM:ZP} provides an example of homotopy-equivalent Zariski pair composed of 14 real lines.
  
\subsection{With complex arrangements}\mbox{}
  
	Let $\R$ be the 10th cyclotomic field, and let $g$ be a generator of the Galois extension. Up to an abuse of notation, the elements of $\R$ are identified with a fixed choice of complex embedding. We define by $\M^+$ (resp. $\M^-$, $\N^+$ and $\N^-$) the arrangement formed by the following 11 lines and where $a=g$ (resp. $a=g^9$, $a=g^3$ and $a=g^7$).
	\begin{equation*}
		\begin{array}{lll}
			L_1 : z=0, & \quad\quad & L_2 : x+y-z=0, \\
			L_3 : x=0, && L_4 : y=0, \\
			L_5 : x-z=0, && L_6 : y-z=0, \\
			L_7 : -a^3x+z=0, && L_8 : y-az=0, \\
			L_9 : (a-1)x-y+z=0, && L_{10} : -a(a-1)x+y+a(a-1)z=0, \\
			L_{11} : -a(a-1)x+y-az=0. && 
		\end{array}
	\end{equation*}
 Remark that the complex conjuagtion sends $\M^+$ on $\M^-$ and $\N^+$ on $\N^-$.
    
    \begin{thm}[Corollary~2.16 and Section~3 of \cite{Gue:ZP}]\label{thm:ExGueZP}
      The pairs $(\M^\pm,\N^\pm)$ and $(\M^\pm,\N^\mp)$ are $\pi_1$-equivalent ordered Zariski pairs.
    \end{thm}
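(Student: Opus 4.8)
The plan is to reduce the four stated pairs to the single pair $(\M^+,\N^+)$ and then to prove three facts about it: (a) $\M^+$ and $\N^+$ have isomorphic ordered intersection lattices, realized by $L_i\mapsto L_i$; (b) no homeomorphism of $\CC\PP^2$ carrying $\M^+$ onto $\N^+$ can induce this ordered isomorphism; (c) $\pi_1(M(\M^+))\simeq\pi_1(M(\N^+))$. The reduction uses the remark preceding the statement: complex conjugation $c\colon\CC\PP^2\to\CC\PP^2$ is a homeomorphism which sends $\M^+$ to $\M^-$ and $\N^+$ to $\N^-$ line by line, hence respects the index order. Thus $c$ intertwines order-preserving homeomorphisms $\M^{+}\to\N^{+}$ with $\M^{-}\to\N^{-}$ (and, composed on one side only, $\M^{+}\to\N^{-}$ with $\M^{+}\to\N^{+}$, etc.), and it yields isomorphisms $\pi_1(M(\M^+))\simeq\pi_1(M(\M^-))$ and $\pi_1(M(\N^+))\simeq\pi_1(M(\N^-))$. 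So (a)--(c) for $(\M^+,\N^+)$ give all four pairs at once; note that Theorem~\ref{thm:equivalence} plays no role here, the $\pi_1$-equivalence being intrinsic to this pair.

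For (a): the defining equations lie in $\QQ(a)$, and $g,g^3,g^7,g^9$ are exactly the primitive tenth roots of unity, a single orbit of $\mathrm{Gal}(\QQ(\zeta_{10})/\QQ)\simeq\ZZ/4\ZZ$. Every incidence among the eleven lines is the vanishing of a $3\times 3$ determinant, a polynomial with rational coefficients in $a$; hence the ordered intersection lattice is Galois-invariant, and $\M^+,\M^-,\N^+,\N^-$ all carry the same ordered combinatorics. Concretely one verifies, working over a symbolic root of the tenth cyclotomic polynomial $\Phi_{10}$, that no determinant beyond those prescribed by the combinatorics vanishes, so that no degeneration occurs at these parameter values.

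For (b) --- the conceptual heart --- since the two complements are required to have isomorphic $\pi_1$, no invariant drawn from $\pi_1$ alone (Alexander modules, characteristic varieties, finite abelian covers) can separate the embeddings; one must exploit the peripheral structure, i.e.\ the positions of the meridian conjugacy classes inside $\pi_1$. The approach is to single out a distinguished cycle of lines passing through triple points and to compute the associated loop-linking type invariant (the $\mathcal{I}$-invariant of an inner-cyclic arrangement), an element of a finite cyclic group extracted from a braided wiring diagram / braid monodromy; then to check that it is preserved by every homeomorphism inducing the ordered lattice isomorphism, and that its values on $\M^+$ and $\N^+$ differ. The delicate points are the choice of cycle, and proving that the invariant is well defined independently of the auxiliary choices in its construction and genuinely topological; this is the step where the arithmetic difference between $\zeta_{10}$ and $\zeta_{10}^3$ has to become visible.

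For (c): compute Zariski--van Kampen presentations of $\pi_1(M(\M^+))$ and $\pi_1(M(\N^+))$ from generic projections --- eleven meridian generators and one relation per singular fibre, read off a braid monodromy --- and exhibit an explicit isomorphism. I expect this to be the main computational obstacle: the two global braid monodromies do differ (that is precisely (b)), yet the difference must induce only an automorphism on the quotient group, so the two presentations have to be matched essentially term by term, realistically with computer assistance. Together, (a)--(c) for $(\M^+,\N^+)$ and the reduction of the first paragraph establish the theorem.
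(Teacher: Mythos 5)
This statement is not proved in the paper at all: it is imported verbatim from the reference \cite{Gue:ZP} (Corollary~2.16 and Section~3 there), and the present paper only uses it as input to Theorem~\ref{thm:GueZP}. So there is no internal proof to compare against; what can be judged is whether your sketch would stand on its own. Your overall architecture is the right one, and it matches what the cited reference actually does: the reduction of all four pairs to $(\M^+,\N^+)$ via complex conjugation is correct (conjugation is a homeomorphism of $\CC\PP^2$ preserving the line indices, so it transports ordered homeomorphisms and $\pi_1$-isomorphisms between the pairs), the Galois-invariance argument for the ordered combinatorics is sound since all incidences are rational polynomial conditions in $a$ and the four values of $a$ form one Galois orbit, and you correctly identify that the obstruction in (b) must come from peripheral data --- the $\mathcal{I}$-invariant of an inner-cyclic triangle of lines is indeed the tool used in \cite{Gue:ZP}. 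You are also right that Theorem~\ref{thm:equivalence} is irrelevant here.

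The gap is that parts (b) and (c), which carry the entire content of the theorem, are left as declared intentions rather than arguments. In (b) you do not choose the cycle, do not compute the invariant for $\M^+$ and $\N^+$, do not verify that the values differ, and do not establish that the invariant is preserved by ordered homeomorphisms --- you explicitly flag all of these as ``the delicate points'' without resolving them; note also that the non-degeneracy check in (a) (no extra vanishing determinants over $\Phi_{10}$) is asserted, not performed. In (c) no isomorphism of fundamental groups is exhibited, and ``matched essentially term by term, realistically with computer assistance'' is a prediction, not a proof. As written, the proposal is a correct plan for reproving the cited result, but it does not constitute a proof; if the intent is merely to use the result, the honest route is the one the paper takes, namely citation.
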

    
    By~\cite[Proposition~2.5]{Gue:ZP}, the automorphism group of the combinatorics of these arrangements is cyclic of order 4. It can be defined as the sub-group of $\Sigma_{11}$ (the symetric group on 11 elements) generated by
    \begin{equation*}
      \sigma = (1\ 3\ 2\ 4)(5\ 6)(7\ 9\ 10\ 8),
    \end{equation*}
    and the action is described by $\sigma\cdot L_i = L_{\sigma(i)}$. In particular, $\sigma$ cyclically permutes the lines $L_1: z=0$, $L_3:x=0$, $L_2:x+y-z=0$ and $L_4:y=0$, in this order. 
    
    In~\cite{Gue:ZP}, the solution used to delete the ordered condition in Theorem~\ref{thm:ExGueZP} is the addition of a line passing through a particular point of multiplicity 4. This allows to trivialize the automorphism group and then to remove the ordered condition. Nevertheless, it has been proven in~\cite{ACGM:funda} that this operation may withdraw the $\pi_1$-equivalence.
    
    Using the augmentation of arrangements defined in Section~\ref{sec:ZP}, we propose an alternative method to remove the ordered condition and conserve the $\pi_1$-equivalence. Indeed, let $\mfM^\pm$ and $\mfN^\pm$ be augmented arrangements of $\M^\pm$ and $\N^\pm$ respectively along $L_1$. We can, for example, consider $\mfM^\pm=\M^\pm \cup \{L_{12},L_{13}\}$ and $\mfN^\pm=\N^\pm \cup \{L_{12},L_{13}\}$, where:
    \begin{equation*}
      L_{12} : x-y+2z=0 \quad\quad\text{and}\quad\quad L_{13} : x-y-2z=0. 
    \end{equation*}
    
    \begin{thm}\label{thm:GueZP}
      The arrangements $\mfM^\pm$ and $\mfN^\pm$ verify the following propositions.
      \begin{enumerate}
        \item The arrangements $\mfM^\pm$ and $\mfN^\pm$ have isomorphic intersection lattice.
        \item There is no homeomorphism of $\CC\PP^2$ sending $\mfM^\pm$ on $\mfN^\pm$.
        \item The fundamental groups of the complements of $\mfM^\pm$ and $\mfN^\pm$ are isomorphic.
      \end{enumerate}
    \end{thm}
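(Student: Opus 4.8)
The plan is to follow the proof of Theorem~\ref{thm:main}, the only new feature being that $(\M^\pm,\N^\pm)$ is an \emph{ordered} Zariski pair rather than a Zariski pair: the two extra lines $L_{12},L_{13}$ are there to rigidify the order-$4$ symmetry $\langle\sigma\rangle$, and augmenting along $L_1$ does the job precisely because $\langle\sigma\rangle$ acts freely on the orbit $\{L_1,L_2,L_3,L_4\}$ (this is why $L_1$ is chosen, and not, say, the $\sigma$-fixed line $L_{11}$). The argument is uniform in the four values $a\in\{g,g^3,g^7,g^9\}$, the combinatorics being independent of $a$. First I would verify that $\mfM^\pm$ and $\mfN^\pm$ are genuinely augmented arrangements of $\M^\pm$, $\N^\pm$ along $L_1$: a direct computation gives $L_1\cap L_{12}\cap L_{13}=[1:1:0]$, and a routine finite check shows that $\{L_{12},L_{13}\}$ meets $\M^\pm\setminus L_1$ (and $\N^\pm\setminus L_1$) generically. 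Granting this, part~(1) is immediate: the order isomorphism $\phi$ of Theorem~\ref{thm:ExGueZP} fixes $L_1$, so extending $\phi$ by $L_{12}\mapsto L_{12}$ and $L_{13}\mapsto L_{13}$ gives an isomorphism between the intersection lattices of $\mfM^\pm$ and $\mfN^\pm$ (both being augmentations along $L_1$).

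For part~(2), suppose there is a homeomorphism $\psi$ of $\CC\PP^2$ with $\psi(\mfM^\pm)=\mfN^\pm$; then $\psi$ permutes the lines, preserves the multiplicity stratification, and induces a lattice isomorphism (\cite{JiaYau}). Since every line of $\M^\pm$ (hence of $\N^\pm$) passes through at least two points of $\S_{\geq 3}$ --- a combinatorial property of the arrangements of \cite{Gue:ZP} that one checks directly --- the lines $L_{12}$ and $L_{13}$ are, in each of $\mfM^\pm$ and $\mfN^\pm$, exactly the lines lying on a single point of $\S_{\geq 3}$, namely the triple point $[1:1:0]$. Therefore $\psi(\{L_{12},L_{13}\})=\{L_{12},L_{13}\}$; taking closures this forces $\psi(\bigcup\M^\pm)=\bigcup\N^\pm$, so $\psi$ induces a lattice isomorphism $\psi_*\colon\L(\M^\pm)\to\L(\N^\pm)$. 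Next, $[1:1:0]$ is the unique point of $\S_{\geq 3}(\mfM^\pm)$ lying on $L_{12}$, and similarly in $\mfN^\pm$; since $\psi$ preserves both $\{L_{12},L_{13}\}$ and the stratum $\S_{\geq 3}$, it must map the triple point $[1:1:0]$ of $\mfM^\pm$ to that of $\mfN^\pm$, and as it permutes $\{L_{12},L_{13}\}$ the third line through it is preserved, i.e.\ $\psi(L_1)=L_1$, so $\psi_*(L_1)=L_1$. Finally, as the automorphism group of the combinatorics is $\langle\sigma\rangle\cong\ZZ/4$, every lattice isomorphism $\L(\M^\pm)\to\L(\N^\pm)$ is of the form $\phi\circ\sigma^k$; the constraint $\psi_*(L_1)=L_1$ gives $\sigma^k(1)=1$, and since $\sigma$ cyclically permutes $1\to3\to2\to4\to1$ this means $4\mid k$, hence $\psi_*=\phi$. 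So $\psi$ realizes the ordered isomorphism between $\M^\pm$ and $\N^\pm$, contradicting Theorem~\ref{thm:ExGueZP}.

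Part~(3) is the short part. By Theorem~\ref{thm:ExGueZP} one has $\pi_1(M(\M^\pm))\simeq\pi_1(M(\N^\pm))$, and applying the first part of Theorem~\ref{thm:equivalence} to $\M^\pm$ along $L_1$ and to $\N^\pm$ along $L_1$ gives
\[
\pi_1\bigl(M(\mfM^\pm)\bigr)\simeq\pi_1\bigl(M(\M^\pm)\bigr)\times\FF_2\simeq\pi_1\bigl(M(\N^\pm)\bigr)\times\FF_2\simeq\pi_1\bigl(M(\mfN^\pm)\bigr).
\]

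The main obstacle is the combinatorial heart of part~(2): making sure $L_{12}$ and $L_{13}$ are recognizable inside the augmented arrangements, so that any homeomorphism is pinned down to fix $L_1$ --- once this is established, the clash with the freeness of the $\langle\sigma\rangle$-action on $\{L_1,L_2,L_3,L_4\}$, and hence with the ordered Zariski pair property of \cite{Gue:ZP}, is automatic. The one genuinely computational point, the genericity of $L_{12}$ and $L_{13}$ relative to the remaining lines (for each admissible $a$), is routine.
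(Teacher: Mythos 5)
Your proposal is correct and follows essentially the same route as the paper: identify $L_{12},L_{13}$ as the only lines on a single point of $\S_{\geq 3}$ to restrict any homeomorphism to the sub-arrangements, use the common point of $L_{12}$ and $L_{13}$ to force $L_1$ to be fixed, deduce from the structure of $\langle\sigma\rangle$ that the induced lattice isomorphism is ordered (contradicting Theorem~\ref{thm:ExGueZP}), and invoke Theorem~\ref{thm:equivalence} for the $\pi_1$-equivalence. Your version merely makes a few steps more explicit (the triple point $[1:1:0]$, the torsor argument $\phi\circ\sigma^k$) than the paper does.
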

    
    \begin{rmk}
      In other words, the couples $(\mfM^\pm,\mfN^\pm)$ and $(\mfM^\pm,\mfN^\mp)$ form $\pi_1$-equivalent Zariski~pairs.
    \end{rmk}
    
    \begin{proof}
      Assume that there exists a homeomorphism $\Psi$ of $\CC\PP^2$ sending $\mfM^\pm$ on $\mfN^\pm$. We denote by $\Phi$ the induced isomorphism on the intersection lattices. Since the lines of $\mfM^\pm\setminus\M^\pm$ and those of $\mfN^\pm\setminus\N^\pm$ are the only lines of $\mfM^\pm$ and of $\mfN^\pm$ containing a single point of $\S_{\geq 3}$, then we have
      \begin{equation*}
        \Psi(\M^\pm)=\N^\pm\quad \text{and}\quad \Phi(\M^\pm)=\N^\pm. 
      \end{equation*}
      We denote by $\phi$ the isomorphism given by the restriction of $\Phi$ to the intersection lattices of~$\M$ and~$\N$.
      
      By Theorem~\ref{thm:ExGueZP}, $\phi$ cannot respect the order on $\M^\pm$ and $\N^\pm$ since it is induced by an homeomorphism. The line $L_1$ is the only one of $\mfM^\pm$ (resp. $\mfN^\pm$) containing the intersection point of the two lines containing a single point of $\S_{\geq 3}$ (ie $L_{12}$ and $L_{13}$). Thus $L_1$ is fixed by $\Phi$ and as a consequence by $\phi$. In particular, this implies that $\phi$ fixes the lines $L_1$, $L_2$, $L_3$ and $L_4$. It follows that $\phi$ should be the ordered isomorphism, which is impossible. 
      
      Since $\M^\pm$ and $\N^\pm$ are $\pi_1$-equivalent arrangements, by Theorem~\ref{thm:equivalence}, it is still the case for $\mfM^\pm$ and $\mfN^\pm$. The lattice isomorphism comes from the one of $\M^\pm$ and $\N^\pm$ together with the construction of augmented arrangements.
    \end{proof}
    
\subsection{With real arrangements}\mbox{}
  
  In order to construct a smaller homotopy-equivalent Zariski pair, we apply a similar argument as previously to the example of Artal-Carmona-Cogolludo-Marco~\cite{ACCM:ZP}. Unfortunately, using a single augmentation is not enough to fix all the automorphisms of the combinatorics as previously done. This problem can be avoided using two successive augmentations.
  
  Let $a$ be a root of $X^2+X-1$, and consider the arrangements $\M$ and $\N$ formed by the 10~lines:
  \begin{equation*}
    \begin{array}{lcl}
      M_1 : z=0, &\quad\quad\quad& L_1 : x-y, \\
      M_2 : x=0, && L_2 : ax-y-az=0, \\
      M_3 : x-z=0, && L_3 : ax-y+z=0, \\
      M_4 : x+(a+1)z=0, && L_4 : y-z=0, \\
      M_5 : x-(a+2)z=0, && L_5 : y=0.
    \end{array}
  \end{equation*}
  
  \begin{thm}[Remark~2.8 and Theorem~4.19 of~\cite{ACCM:ZP}]\label{thm:ACCMoZP}
    The arrangements $\M$ and $\N$ form a homotopy-equivalent ordered Zariski pair.
  \end{thm}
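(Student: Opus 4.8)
The plan is to establish the three constituent assertions in turn: that $\M$ and $\N$ have isomorphic ordered combinatorics, that their complements are homotopy equivalent, and that no homeomorphism of $\CC\PP^2$ carrying $\M$ to $\N$ can respect the given labelling.

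The combinatorial part is a finite computation. Here $\N$ is obtained from $\M$ by the non-trivial automorphism of $\QQ(\sqrt5)/\QQ$, i.e.\ by replacing $a$ with the other root $a'$ of $X^2+X-1$; since both roots are real, $\M$ and $\N$ are genuine real arrangements. I would therefore list $\Sing(\M)$ together with the multiplicity of each point --- reading off which of the ten defining forms vanish there --- and observe that applying this Galois automorphism to the coefficients carries each collinearity relation of $\M$ to the corresponding one of $\N$. This shows that $M_i\mapsto M_i$, $L_i\mapsto L_i$ is a lattice isomorphism $\phi$, and the same bookkeeping identifies the automorphism group of the common combinatorics (which, as the surrounding discussion of ``two successive augmentations'' suggests, is not killed by a single augmentation).

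For the homotopy equivalence I would first try to produce an honest homeomorphism $\psi$ of $\CC\PP^2$ with $\psi(\M)=\N$ --- necessarily inducing a \emph{non-trivial} element of the combinatorial automorphism group, so this does not contradict the Zariski-pair statement. Because both arrangements are real, a natural source of such a $\psi$ is a real isotopy: a loop in the space of real arrangements, based at $\M$, along which the unordered combinatorics is constant and whose label-monodromy realises the automorphism identifying $\M$ with $\N$; any such loop yields an ambient homeomorphism, hence homeomorphic (in particular homotopy equivalent) complements. If no such isotopy exists, I would fall back on fundamental groups: compute braid-monodromy presentations of $\pi_1(M(\M))$ and $\pi_1(M(\N))$ from the two real wiring diagrams via the Arvola--Randell algorithm, exhibit an explicit isomorphism between them (the two monodromies differing by the controlled symmetry coming from the Galois conjugation), and upgrade it to a homotopy equivalence either by matching the minimal CW models of the complements cell by cell, or --- if one can verify asphericity of both complements, e.g.\ via a Falk--Randell type fibration or by excluding the relevant non-fibred subarrangements --- by appealing to the fact that a $\pi_1$-isomorphism between aspherical complexes is realised by a homotopy equivalence.

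The crux, and the step I expect to cause the real difficulty, is the Zariski-pair assertion: there is no homeomorphism of $\CC\PP^2$ sending $\M$ to $\N$ and inducing $\phi$. An isomorphism of fundamental groups alone cannot detect this; one needs an invariant of the embedded pair that remembers the \emph{ordered} system of meridians $\gamma_1,\dots,\gamma_{10}$. Concretely, I would search for a characteristic quotient $\rho\colon\pi_1(M(\M))\twoheadrightarrow F$ onto a small finite (or finite nilpotent) group such that the ordered tuple $\big(\rho(\gamma_1),\dots,\rho(\gamma_{10})\big)$ is a topological invariant up to $\aut(F)$, and then check from the two braid-monodromy presentations that the analogous tuple for $\N$ cannot be brought to that of $\M$ by any automorphism of $F$ compatible with the labelling $\phi$. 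Identifying the correct quotient --- the real-arrangement analogue of the truncated Alexander-type invariants used in Rybnikov's argument --- is the delicate point; once it is pinned down the verification is a mechanical computation on the wiring diagrams.
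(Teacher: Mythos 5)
The paper itself gives no proof of this statement: it is imported wholesale from \cite{ACCM:ZP} (Remark~2.8 for the homotopy equivalence, Theorem~4.19 for the ordered Zariski pair), so the comparison is really between your sketch and the argument of that reference. Your reading of the setup is correct --- $\N$ is obtained from $\M$ by switching the two real roots of $X^2+X-1$ (the paper's definition omits to say this explicitly) --- and your \emph{first} route to the homotopy equivalence is the right one: a non-order-preserving correspondence between the two arrangements does exist (an element of the combinatorial automorphism group is realized geometrically and carries a relabeling of $\M$ onto $\N$), which is exactly why this pair is only an \emph{ordered} Zariski pair, why the present paper must augment it to obtain an honest one, and where the homotopy equivalence of the complements comes from. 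Your fallback route should be discarded: complements of line arrangements of this kind are not aspherical in general, verifying asphericity is not a realistic step, and a bare $\pi_1$-isomorphism would not by itself give the homotopy equivalence.

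The genuine gap is the ordered-Zariski-pair assertion, and you have essentially flagged it yourself. Saying that you would ``search for a characteristic quotient $F$ such that the ordered tuple of meridian images is a topological invariant'' names the problem rather than solves it; the construction and computation of such an invariant is the entire mathematical content of Theorem~4.19 of \cite{ACCM:ZP}. What is used there is not a Rybnikov-type truncated Alexander invariant (there is no indication those would separate these two arrangements), but an invariant extracted from the braid monodromy: one first proves that the braid monodromy, up to the appropriate equivalence, is an invariant of the \emph{ordered} embedded topology, and then distinguishes the two monodromies by pushing them into a computable finite quotient. Without producing a concrete invariant and carrying out that computation, your proposal establishes the combinatorial isomorphism and (via the geometric correspondence above) the homotopy equivalence, but not the statement that makes the theorem non-trivial.
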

  
  By~\cite[Lemma~2.9]{ACCM:ZP}, the automorphsim group of the combinatorics of $\M$ (and $\N$ too) is isomorphic to the sub-group of $\Sigma_5$ (the symetric group on 5 elements) generated by:
  \begin{equation*}
    \sigma_1=(1,2,3,4,5)\quad \text{ and } \sigma_2=(2,4,5,3).
  \end{equation*}
  More precisely, it is the semi-direct product of $\langle \sigma_1 \rangle$ and $\langle \sigma_2 \rangle$.
  
  The action of the generators can be viewed as an action on the five lines $M_i$, and is given by $\sigma_j \cdot M_i = M_{\sigma_j(i)}$. The idea is to trivialize this automorphism group by augmentations of these arrangements. The problem is the following: if we fix one of the line $M_i$ with an augmentation, then the automorphism of the obtained combinatorics is still not trivial (indeed the stabilisator of any line is never the whole group). Thus, we need to consider an additional augmentation. 

  Let $\mfM_{M_1,L_5}^+$ (resp. $\mfN_{M_1,L_5}^+$) be the arrangement arising from two augmentations of $\M$ (resp. $\N$), along $M_1$ and $L_5$. Furthermore, we can assume that the four added lines are defined by real linear forms, in such way that $\mfM_{M_1,L_5}^+$ and $\mfN_{M_1,L_5}^+$ are real-complexified arrangements. For example, we can consider the arrangement $\mfM_{M_1,L_5}^+=\M\cup\{D_1,D_2,D_3,D_4\}$ and $\mfN_{M_1,L_5}^+=\N\cup\{D_1,D_2,D_3,D_4\}$, where
  \begin{equation*}
    \begin{array}{ccc}
      D_1 : x+y+z=0 & \quad\quad\text{and}\quad\quad & D_2 : x+y+2z=0, \\
      D_3 : x+3y-5z=0 & \quad\quad\text{and}\quad\quad & D_4 : x-3y-5z=0.
    \end{array}
  \end{equation*}
  These arrangements are well augmented arrangements of $\M$ and $\N$ since the lines $M_1$, $D_1$ and $D_2$ (resp. $L_5$, $D_3$ and $D_4$) are concurrent; and $D_1,\, D_2$ (resp. $D_3,\, D_4$) are generic with all the other lines.
  
  \begin{lem}\label{lem:ACCM_automorphism}
      Any isomorphism between $\L(\mfM_{M_1,L_5}^+)$ and $\L(\mfN_{M_1,L_5}^+)$ restricts to an ordered isomorphism between $\L(\M)$ and $\L(\N)$.
  \end{lem}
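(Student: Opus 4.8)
The plan is to show that any lattice isomorphism $\Theta\colon \L(\mfM^+_{M_1,L_5})\to\L(\mfN^+_{M_1,L_5})$ must (a) send the set $\{D_1,D_2,D_3,D_4\}$ of augmenting lines to itself, (b) send $\M$ to $\N$, and then (c) be ordered on the $\M$-part. For (a), I would use the same combinatorial marker as in the proof of Theorem~\ref{thm:GueZP}: the four lines $D_1,D_2,D_3,D_4$ are exactly the lines of $\mfM^+_{M_1,L_5}$ that pass through a single point of $\S_{\geq 3}$ (the four added lines each meet the rest of the arrangement only at their common triple point with $M_1$, resp.\ $L_5$, and nowhere else in $\S_{\geq 3}$, while every line of $\M$ already carried at least two triple points in the ordered Zariski pair). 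Hence $\Theta(\{D_1,D_2,D_3,D_4\})=\{D_1,D_2,D_3,D_4\}$, and consequently $\Theta$ restricts to an isomorphism $\Phi\colon\L(\M)\to\L(\N)$.

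Next I would pin down which lines of $\M$ are distinguished by the augmentation. The pair $\{D_1,D_2\}$ is concurrent with $M_1$ and generic otherwise, so the triple point $D_1\cap D_2$ lies on $M_1$ and on no other line; likewise $D_3\cap D_4$ lies on $L_5$ alone. Since $\Theta$ preserves incidences and preserves the set $\{D_1,\dots,D_4\}$, it must pair up $\{D_1,D_2\}$ and $\{D_3,D_4\}$ consistently (these two pairs are told apart combinatorially, e.g.\ by whether the line of $\M$ through their concurrency point is one that originally met the $L_i$'s in a prescribed pattern), and therefore $\Phi(M_1)=M_1$ and $\Phi(L_5)=L_5$. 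Now I invoke the structure of $\aut(\L(\M))$: it is the semidirect product $\langle\sigma_1\rangle\rtimes\langle\sigma_2\rangle$ with $\sigma_1=(1,2,3,4,5)$ and $\sigma_2=(2,4,5,3)$ acting on the $M_i$. The subgroup fixing the index $1$ (i.e.\ fixing $M_1$) is $\langle\sigma_2\rangle$, which acts on $\{M_2,M_3,M_4,M_5\}$ as the $4$-cycle $(2,4,5,3)$; the only element of $\langle\sigma_2\rangle$ that also fixes $M_5$ is the identity. Hence the only combinatorial automorphism of $\M$ fixing both $M_1$ and $L_5$ is the identity — but I must be careful here: the relevant group is the group of automorphisms of the \emph{unordered} combinatorics, so I need that the "extra" lines $L_1,\dots,L_5$ are permuted in lockstep with the $M_i$ by the $\aut$-action (which is exactly the content of \cite[Lemma~2.9]{ACCM:ZP}), so that fixing $M_1$ and $L_5$ forces the whole permutation to be trivial. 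Therefore $\Phi$, composed with the canonical ordered isomorphism $\M\to\N$, is a combinatorial automorphism fixing $M_1$ and $L_5$, hence is the identity, so $\Phi$ itself is the ordered isomorphism.

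The main obstacle I anticipate is step (b)/the bookkeeping in the previous paragraph: one augmentation alone leaves a nontrivial stabilizer (as the paper explicitly notes), so the argument genuinely needs \emph{two} augmentations, and I must verify that the second augmenting point (on $L_5$) is not "accidentally" symmetric to the first — equivalently, that $M_1$ and $L_5$ lie in different $\aut(\L(\M))$-orbits, or at least that no automorphism simultaneously fixes both. Checking this reduces to the explicit computation in $\langle\sigma_1\rangle\rtimes\langle\sigma_2\rangle$ above: the stabilizer of $M_1$ is $\langle\sigma_2\rangle\cong\ZZ/4$, and one reads off from the cycle $(2,4,5,3)$ that no nonidentity power fixes the line corresponding to index $5$ under the $\aut$-action on the $L_i$. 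A secondary, more routine obstacle is confirming the "single point of $\S_{\geq 3}$" characterization of $D_1,\dots,D_4$, which amounts to checking that in the ordered Zariski pair of \cite{ACCM:ZP} every original line already meets at least two triple points — this is part of the combinatorics recalled there — and that the explicit forms $D_1,\dots,D_4$ are indeed generic with the rest apart from the prescribed concurrences, which is the short linear-algebra verification the paper states just before the lemma.
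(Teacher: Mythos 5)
Your proposal is correct in outline and follows the paper's strategy for the first two steps (the lines $D_1,\dots,D_4$ are the only ones carrying a single point of $\S_{\geq 3}$, hence are preserved; the concurrency points $D_1\cap D_2\in M_1$ and $D_3\cap D_4\in L_5$ then force $\Phi(M_1)=M_1$ and $\Phi(L_5)=L_5$ -- for the latter the paper's discriminating feature is simply that $M_1$ passes through the unique quintuple point while $L_5$ does not, which is cleaner than your ``prescribed pattern'' remark). Where you genuinely diverge is the endgame. The paper never uses the action of $\aut(\L(\M))$ on the lines $L_i$: it converts ``$\phi$ fixes $L_5$'' into ``$\phi$ fixes $M_2$'' via two incidences ($L_1$ is the unique line meeting $M_1$ in a double point, hence fixed, and $L_1\cap L_5$ is a triple point lying on $M_2$), and then only needs the stated action $\sigma_j\cdot M_i=M_{\sigma_j(i)}$ together with the fact that the pointwise stabilizer of $\{1,2\}$ in $\langle\sigma_1,\sigma_2\rangle$ is trivial. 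You instead apply the stabilizer computation directly to the pair $(M_1,L_5)$, which requires knowing that the automorphism group permutes the $L_i$ by the \emph{same} index permutation as the $M_i$ (``lockstep''). That assertion is true, but it is not stated in this paper (which only describes the action on the $M_i$), and you should not just defer it to the citation: it follows in one line from the observation that, for each $i$, $L_i$ is the unique line meeting $M_i$ in a double point, so any automorphism inducing $M_i\mapsto M_{\alpha(i)}$ must send $L_i\mapsto L_{\alpha(i)}$. With that observation supplied, your computation (stabilizer of index $1$ is $\langle\sigma_2\rangle\cong\ZZ/4$, and no nontrivial power of $(2,4,5,3)$ fixes $5$) closes the proof; without it there is a small but real gap, which the paper's incidence detour is precisely designed to avoid.
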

  
  \begin{proof}
    Let $\phi^+$ be an isomorphism between $\L(\mfM_{M_1,L_5}^+)$ and $\L(\mfN_{M_1,L_5}^+)$. The four lines of $\mfM_{M_1,L_5}^+\setminus\M$ and the ones of $\mfN_{M_1,L_5}^+\setminus\N$ (that is $D_1,\dots,D_4$) contain only one point of $\S_{\geq 3}$ (in opposition with all the others which contain at least two points). This implies that $\phi^+$ restricts to an isomorphism $\phi$ between $\L(\M)$ and $\L(\N)$.
    
    Remark that $M_1$ and $L_5$ are the only lines containing one of the intersection points of the four additional lines. Furthermore only $M_1$ contains a quintuple point. Then $M_1$ and $L_5$ are fixed by $\phi^+$ and as a consequence by $\phi$. In $\M$ and $\N$, the line $L_1$ (defined by $x-y=0$) is the only one intersecting $M_1$ in a double point, it is thus fixed by $\phi$. Since it fixes $L_5$ and $L_1$ then it also fixes $M_2$ because they intersect in a triple point. 
    
   Using the description of the automorphism group of the combinatorics of $\M$ and $\N$ as a sub-group of $\Sigma_5$ previously given and the fact that $\phi$ fixes $M_1$ and $M_2$, we deduce that $\phi$ is ordered.
  \end{proof}
  
  \begin{thm}\label{thm:ACCM_ZP}
    The real-complexified arrangements $\mfM_{M_1,L_5}^+$ and $\mfN_{M_1,L_5}^+$ verify the following propositions:
       \begin{enumerate}
        \item The arrangements $\mfM_{M_1,L_5}^+$ and $\mfN_{M_1,L_5}^+$ have isomorphic intersection lattices.
        \item There is no homeomorphism of $\CC\PP^2$ sending $\mfM_{M_1,L_5}^+$ on $\mfN_{M_1,L_5}^+$.
        \item The complements of $\mfM_{M_1,L_5}^+$ and $\mfN_{M_1,L_5}^+$ are homotopy-equivalent.
      \end{enumerate}
   \end{thm}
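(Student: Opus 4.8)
The plan is to mirror the structure of the proof of Theorem~\ref{thm:main}, using Lemma~\ref{lem:ACCM_automorphism} and Theorem~\ref{thm:equivalence} as the two principal tools, and adapting the combinatorial rigidity argument of the previous section to the present situation with two successive augmentations.

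First I would prove property~(1). The isomorphism of intersection lattices is essentially immediate: by Theorem~\ref{thm:ACCMoZP} there is an ordered lattice isomorphism between $\L(\M)$ and $\L(\N)$, and since $\mfM_{M_1,L_5}^+$ and $\mfN_{M_1,L_5}^+$ are obtained from $\M$ and $\N$ by augmenting along the \emph{same} lines ($M_1$ and $L_5$) with lines having the same incidence pattern, this isomorphism extends to a lattice isomorphism sending $D_i$ to $D_i$; here I would use that an augmented arrangement's combinatorics depends only on the combinatorics of the original arrangement, the chosen line, and the genericity conditions (conditions~(1)--(2) in the definition of augmented arrangement).

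Next, property~(2). Suppose there is a homeomorphism $\Psi$ of $\CC\PP^2$ with $\Psi(\mfM_{M_1,L_5}^+)=\mfN_{M_1,L_5}^+$, and let $\Phi$ be the induced lattice isomorphism. By Lemma~\ref{lem:ACCM_automorphism}, $\Phi$ restricts to an \emph{ordered} isomorphism $\phi$ between $\L(\M)$ and $\L(\N)$; in particular $\Phi(\M)=\N$ (one also needs that $\Psi$ carries $\bigcup_{L\in\M}L$ onto $\bigcup_{L\in\N}L$, which follows because the four added lines $D_1,\dots,D_4$ are precisely the lines of each arrangement meeting $\S_{\geq 3}$ in a single point, a topologically detectable condition, exactly as in the proof of Theorem~\ref{thm:GueZP}). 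Hence $\Psi$ restricts to a homeomorphism of $\CC\PP^2$ sending $\M$ to $\N$ and inducing the ordered combinatorial isomorphism $\phi$ between them. This contradicts Theorem~\ref{thm:ACCMoZP}, which asserts that $(\M,\N)$ is an ordered Zariski pair, i.e.\ that no homeomorphism realizing the ordered isomorphism exists. I expect this is the step requiring the most care: one must argue cleanly that the restriction of $\Psi$ is genuinely a homeomorphism of the pair $(\CC\PP^2,\bigcup_{L\in\M}L)$ onto $(\CC\PP^2,\bigcup_{L\in\N}L)$ and that it induces $\phi$, i.e.\ that passing from $\Psi$ to its restriction does not lose the combinatorial information; this is where Lemma~\ref{lem:ACCM_automorphism} does the real work, reducing everything to the known ordered Zariski pair.

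Finally, property~(3). Since $\M$ and $\N$ are real-complexified arrangements forming an ordered Zariski pair, applying Theorem~\ref{thm:equivalence} once to the augmentation along $M_1$ and once more to the augmentation along $L_5$ (the class of real-complexified arrangements being stable under augmentation by real lines, which is guaranteed by our explicit choice of the $D_i$) shows that $M(\mfM_{M_1,L_5}^+)$ and $M(\mfN_{M_1,L_5}^+)$ are both homotopy-equivalent to $M(\M)\times\FF_2\times\FF_2$ up to the $\pi_1$ statement, and in fact homotopy-equivalent as spaces because $\M$ and $\N$ have homotopy-equivalent complements; more precisely, $\mfM_{M_1,L_5}^+$ and $\mfN_{M_1,L_5}^+$ are two successive augmentations of the \emph{same} combinatorial data, so two applications of Theorem~\ref{thm:equivalence} (first comparing $\mfM_{M_1,L_5}^+$ with the arrangement augmenting $\N$ along $M_1$ and $L_5$, which equals $\mfN_{M_1,L_5}^+$, using that $\M$ and $\N$ already have homotopy-equivalent complements by Theorem~\ref{thm:ACCMoZP}) give the claim. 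I would spell out that the intermediate arrangement $\M$ augmented along $M_1$ is itself a real-complexified arrangement so that the second application of the theorem is legitimate.
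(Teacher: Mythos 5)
Your proposal follows essentially the same route as the paper: property (1) from the fact that both arrangements are built by the same augmentation procedure over lattice-isomorphic (indeed order-isomorphic) arrangements, property (2) by feeding the induced lattice isomorphism into Lemma~\ref{lem:ACCM_automorphism} to produce an ordered isomorphism contradicting Theorem~\ref{thm:ACCMoZP}, and property (3) by combining Theorem~\ref{thm:ACCMoZP} with (two applications of) Theorem~\ref{thm:equivalence}. The paper's proof is just a terser version of yours, so the proposal is correct and matches the paper's argument.
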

  
  \begin{proof}
    Let $\psi^+$ be a homeomorphism of $\CC\PP^2$ sending $\mfM_{M_1,L_5}^+$ on $\mfN_{M_1,L_5}^+$. We denote by $\phi^+$ the induced isomorphism on the intersection lattices. By Lemma~\ref{lem:ACCM_automorphism}, $\phi^+$ restricts into an ordered isomorphism between $\L(\M)$ and $\L(\N)$ which is in conflict with Theorem~\ref{thm:ACCMoZP}.
    
    We conclude noticing that the construction of $\mfM_{M_1,L_5}^+$ and $\mfN_{M_1,L_5}^+$ compels that they are lattice isomorphic; and the homotopy-equivalence is a direct consequence of Theorem~\ref{thm:ACCMoZP} and Theorem~\ref{thm:equivalence}.
  \end{proof}
	
	%We conclude this paper by the following question:
	%\begin{question}
		%For a fixed intersection lattice, is the homeomorphism type of the complement of an arrangement determined by its homotopical type or its fundamental group?
	%\end{question}

\section*{Acknowledgement}

The author would like to thank \textsc{J.~Viu-Sos} for all the rewarding discussions and for his helpful remarks on this manuscript.

%%%%%%%%%%%%%%%%%%%%%%%%%%%%%%
%                                   Bibliography                                          %
%%%%%%%%%%%%%%%%%%%%%%%%%%%%%%

%\newpage
\bibliographystyle{plain}
\bibliography{biblio}

\end{document}